\theoremstyle{thmstyleone}
\newtheorem{theorem}{Theorem}[section]
\newtheorem{lemma}[theorem]{Lemma}
\newtheorem{corollary}[theorem]{Corollary}
\theoremstyle{thmstyletwo}
\theoremstyle{thmstylethree}
\begin{document}

\title[Tournaments and Even Graphs are Equinumerous]{Tournaments and Even Graphs are Equinumerous}

\author[1]{\fnm{Gordon F.} \sur{Royle}}\email{gordon.royle@uwa.edu.au}
\author[1]{\fnm{Cheryl E.} \sur{Praeger}}\email{cheryl.praeger@uwa.edu.au}
\author[1]{\fnm{S. P.} \sur{Glasby}}\email{stephen.glasby@uwa.edu.au}
\author*[2]{\fnm{Saul D.} \sur{Freedman}}\email{sdf8@st-andrews.ac.uk}
\author[1]{\fnm{Alice} \sur{Devillers}}\email{alice.devillers@uwa.edu.au}

\affil[1]{\orgdiv{Centre for the Mathematics of Symmetry and Computation, Department of Mathematics and Statistics}, \orgname{The University of Western Australia}, \orgaddress{\street{35 Stirling Hwy}, \city{Crawley}, \postcode{6009}, \state{WA}, \country{Australia}}}
\affil*[2]{\orgdiv{School of Mathematics and Statistics}, \orgname{University of St Andrews}, \orgaddress{\city{St Andrews}, \postcode{KY16 9SS}, \country{UK}}}


\abstract{A graph is called \emph{odd} if there is an orientation of its edges and an automorphism that reverses the sense of an odd number of its edges, and \emph{even} otherwise. Pontus von Br\"omssen (n\'e Andersson) showed that the existence of such an automorphism is independent of the orientation, and considered the question of counting pairwise non-isomorphic even graphs. Based on computational evidence, he made the rather surprising conjecture that the number of pairwise non-isomorphic \emph{even graphs} on $n$ vertices is equal to the number of pairwise non-isomorphic \emph{tournaments} on $n$ vertices. 
We prove this conjecture using a counting argument with several applications of the Cauchy-Frobenius Theorem.}

\keywords{Tournaments, Graph counting, Graph enumeration, Graph automorphisms, Graph isomorphisms, Cauchy-Frobenius Theorem}

\pacs[MSC Classification]{Primary: 05C30; Secondary: 05C75, 05A15}

\maketitle

\section{Introduction}

In a paper on the asymptotics of random tournaments,  Pontus Andersson \cite[page 252]{MR1662785} introduced the concept of an \emph{even graph} in the following fashion: Given a graph $X$, assign an arbitrary orientation to its edges. Then an automorphism $g \in \mathrm{Aut}(X)$ \emph{reverses the sense} of an edge $e = \{u,v\}$ if $e$ is oriented from $u$ to $v$, but $e^g$ is oriented from $v^g$ to $u^g$. Changing the orientation of the graph might alter the \emph{number} of edges whose sense is reversed by $g$, but Andersson showed that it does not alter the \emph{parity} of this number. He defined a graph to be \emph{odd} if it has an automorphism reversing the sense of an odd number of edges and \emph{even} otherwise. (This is somewhat overloading the adjectives ``even'' and ``odd'', which are already used for several different concepts relating to both permutations and graphs but, to avoid confusion, we shall be explicit when using any of these other meanings.)

Figure~\ref{fig:4vert} shows the pairwise non-isomorphic graphs on four vertices, along with an odd automorphism, i.e., an automorphism reversing the sense of an odd number of edges, if one exists. The four graphs with no odd automorphism listed are the four even graphs on four vertices.

A \emph{tournament} is a directed graph $D$ with arc set $A(D)$ such that for any distinct vertices $\{v,w\}$, either $(v,w) \in A(D)$ or $(w,v) \in A(D)$, but not both. Equivalently, a tournament is an \emph{oriented complete graph}. 
Figure~\ref{fig:tourn4} illustrates the four pairwise non-isomorphic tournaments on four vertices. 

When counting graphs (or tournaments, even graphs etc.) on $n$ vertices, we normally fix a vertex set of size $n$, and distinguish between counting \emph{pairwise distinct} graphs and counting \emph{pairwise non-isomorphic} graphs as above. For brevity the former is normally referred to as counting \emph{labelled} graphs, and the latter as counting \emph{unlabelled} graphs. As we shall see, it is no coincidence that the numbers of unlabelled even graphs and tournaments on four vertices are the same. Pontus von Br\"omssen computed the number of unlabelled even graphs on up to $10$ vertices to enter the sequence into the On-Line Encyclopedia of Integer Sequences \cite{A000568}. He was surprised to see that his numbers coincided perfectly with the leading entries of the sequence A000568 of \cite{A000568}, which counts the number of unlabelled tournaments. In a comment on the sequence, he asked whether the numbers of such even graphs and tournaments actually coincide for all $n$. In this note, we answer this question positively.

\begin{figure}[t]
\centering
\includegraphics{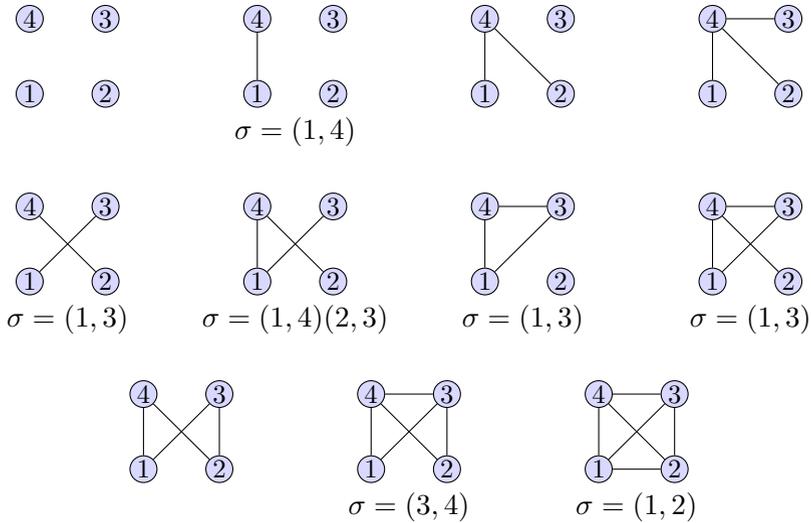}
\caption{$4$-vertex graphs with an odd automorphism if one exists}
\label{fig:4vert}
\end{figure}

\begin{figure}[t]
\centering
\includegraphics{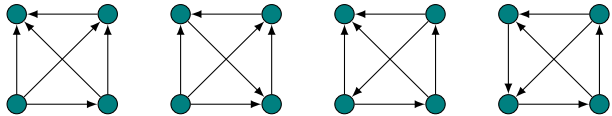}
\caption{Tournaments on four vertices}
\label{fig:tourn4}
\end{figure}

The most obvious and satisfying way to prove that two sequences counting combinatorial structures are the same is to find a reasonably natural bijection between the sets enumerated by the two sequences. This not only proves that the sequences are equal, but also gives a convincing explanation for \emph{why} they are the same. The second-best option is to find a \emph{counting argument} showing that the two sequences are given by the same formula or expression (or satisfy the same recurrence, etc.). By using the Cauchy-Frobenius Theorem and double counting, we find expressions for the numbers of unlabelled graphs, tournaments and odd graphs, showing that for any fixed number of vertices,

\medskip

\begin{center}
Number of graphs = Number of tournaments + Number of odd graphs.
\end{center}

\medskip

As a graph is either even or odd, this immediately implies our main result:

\begin{theorem}
\label{thm:evenandtourn}
For all $n \geqslant 2$, the number of pairwise non-isomorphic even graphs on $n$ vertices is equal to the number of pairwise non-isomorphic tournaments on $n$ vertices.
\end{theorem}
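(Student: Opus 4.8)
The plan is to reduce \Cref{thm:evenandtourn} to the displayed identity from the introduction, namely that on a fixed vertex set the number of unlabelled graphs equals the number of unlabelled tournaments plus the number of unlabelled odd graphs. Since every graph is either even or odd, the number of unlabelled graphs also equals the number of unlabelled even graphs plus the number of unlabelled odd graphs, and subtracting the two identities yields the theorem. Each of the three quantities counts $S_n$-orbits on a set of labelled objects closed under isomorphism (graphs, tournaments, odd graphs), so I would evaluate all three by the Cauchy--Frobenius Theorem. Writing $c_2(g)$ for the number of orbits of $g\in S_n$ on the unordered pairs of vertices, a labelled graph is fixed by $g$ exactly when each pair-orbit consists entirely of edges or entirely of non-edges; hence $g$ fixes $2^{c_2(g)}$ graphs, and the number of unlabelled graphs is $\frac{1}{n!}\sum_{g\in S_n}2^{c_2(g)}$.

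For tournaments I would analyse when the pairs in a single $g$-orbit can be oriented $g$-invariantly. Following the orbit of a pair $\{u,v\}$ until it first returns to itself, the return map either fixes $u$ and $v$ or interchanges them; in the first case the orbit admits exactly two $g$-invariant orientations, and in the second none exists. A short cycle count shows that such an \emph{interchanging} orbit occurs precisely when $g$ has a cycle of even length, that is, precisely when $g$ has even order. Hence $g$ fixes a tournament if and only if $g$ has odd order, in which case it fixes $2^{c_2(g)}$ of them, so the number of unlabelled tournaments is $\frac{1}{n!}\sum_{g\text{ of odd order}}2^{c_2(g)}$ and
$$
\text{(graphs)}-\text{(tournaments)}=\frac{1}{n!}\sum_{g\text{ of even order}}2^{c_2(g)}.
$$

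For odd graphs I would exploit the parity function directly. Fixing an orientation, let $\mathrm{rev}_X(g)\in\{0,1\}$ be the parity of the number of edges whose sense $g$ reverses; by Andersson's orientation-independence this is well defined, and a direct computation (tracking how reversing one arc flips exactly two reversal indicators) shows that $\mathrm{rev}_X\colon\mathrm{Aut}(X)\to\mathbb{Z}/2\mathbb{Z}$ is a homomorphism equal to the number, modulo $2$, of interchanging pair-orbits contained in $E(X)$. Two consequences drive the count. First, for fixed $g$ of even order, among the $2^{c_2(g)}$ graphs fixed by $g$ exactly half satisfy $\mathrm{rev}_X(g)=1$, since flipping the membership of one interchanging orbit toggles the parity; thus $\sum_{g\text{ of even order}}2^{c_2(g)}=2N$, where $N$ counts pairs $(g,X)$ with $g\in\mathrm{Aut}(X)$ and $\mathrm{rev}_X(g)=1$. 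Second, the homomorphism property gives $|\{g\in\mathrm{Aut}(X):\mathrm{rev}_X(g)=1\}|=\tfrac12|\mathrm{Aut}(X)|$ for every labelled odd $X$ (and $0$ for even $X$), so $N=\tfrac12\sum_{X\text{ odd}}|\mathrm{Aut}(X)|$. Orbit--stabiliser turns $\sum_{X\text{ odd}}|\mathrm{Aut}(X)|$ into $n!$ times the number of unlabelled odd graphs, and assembling these equalities identifies $\frac{1}{n!}\sum_{g\text{ of even order}}2^{c_2(g)}$ with the number of unlabelled odd graphs, yielding the identity.

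The main obstacle I anticipate is the orbit-level bookkeeping underpinning both halves: proving cleanly that interchanging pair-orbits exist exactly for even-order permutations, and that $\mathrm{rev}_X(g)$ equals the parity of the number of such orbits lying in $E(X)$. Once these structural facts are secured, the two appearances of $\sum_{g\text{ of even order}}2^{c_2(g)}$ — one from subtracting the tournament count from the graph count, the other from double-counting odd automorphisms — must be matched, and the only delicate point is keeping the factors of $2$ and of $|\mathrm{Aut}(X)|$ consistent between the Cauchy--Frobenius averages and the orbit--stabiliser rewriting.
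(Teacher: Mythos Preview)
Your proposal is correct and follows essentially the same route as the paper: Cauchy--Frobenius for graphs and tournaments, a double count of pairs $(X,g)$ with $g$ an odd automorphism for odd graphs, and the identity $\text{graphs}=\text{tournaments}+\text{odd graphs}$. Your ``interchanging pair-orbits'' are exactly the paper's undirected images of self-paired cycles of $g_A$, your $c_2(g)$ is the paper's $c(g_E)$, and your additive $\mathrm{rev}_X$ is the paper's multiplicative $\mathrm{sgn}_X$; the structural lemmas you flag as the ``main obstacle'' (interchanging orbits exist iff $|g|$ is even, and $\mathrm{rev}_X(g)$ equals the parity of the number of such orbits inside $E(X)$) are precisely Lemmas~\ref{le:selfpairedeven} and~\ref{le:cycle}.
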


It is an open problem to find a natural bijection between the sets of unlabelled even graphs and tournaments on $n$ vertices.

\section{Counting Graphs}
\label{sec:graphcount}

In general, counting a family of labelled (i.e., pairwise distinct) graphs on $n$ vertices is easier than counting unlabelled (i.e., pairwise non-isomorphic) graphs. The Cauchy-Frobenius Theorem is a fundamental tool that can be used to express the number of unlabelled graphs as a sum, each of whose terms is the size of a specific set of labelled graphs.

Before applying this theorem however, we need to establish some notation. Let $[n] = \{1,2,\ldots,n\}$ denote
the vertex set of all of our graphs and tournaments on $n$ vertices. Then define
\[
E_n = \{ \{u,v\} : u, v \in [n], u \ne v \},
\] 
so that $E_n$ is the set of \emph{unordered pairs} of distinct vertices.
We think of $E_n$ as being the set of \emph{all possible edges} in a graph with vertex set $[n]$.

Suppose that $X$ and $Y$ are graphs with vertex set $[n]$ and  edge-sets $E(X)$ and $E(Y)$ respectively. Then $X$ and $Y$ are \emph{isomorphic} if and only if there is some permutation $g \in \mathrm{Sym}(n)$ such that $E(Y) = E(X)^g$, where
\[
E(X)^g = \{ \{u^g,v^g\} \mid \{u,v\} \in E(X)\}.
\]
So the number of unlabelled graphs is the \emph{number of orbits} of the symmetric group $\mathrm{Sym}(n)$ acting on $\Omega = \mathcal{P}(E_n)$ (the powerset of $E_n$).

In order to count orbits, we turn to the following well-known theorem (see Neumann \cite{MR562002} for the fascinating history of this result).
\begin{theorem}[Cauchy-Frobenius Theorem]\label{thm:cauchyfrob}
Let $G$ be a permutation group acting on a set $\Omega$. The number of orbits of $G$ on $\Omega$ is given by the expression
\[
\frac{1}{|G|} \sum_{g \in G} |\mathrm{fix}(g)|,
\]
where $\mathrm{fix}(g)$ is the set of elements of $\Omega$ fixed by $g$.
\end{theorem}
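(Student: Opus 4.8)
The plan is to prove the result by double counting the set of fixed-point incidences
\[
S = \{ (g, \omega) \in G \times \Omega : \omega^g = \omega \}.
\]
Counting $S$ by first summing over $g \in G$ is immediate: for each $g$, the number of points $\omega$ with $\omega^g = \omega$ is by definition $|\mathrm{fix}(g)|$, so $|S| = \sum_{g \in G} |\mathrm{fix}(g)|$, which is exactly the numerator in the claimed expression.

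The second step is to count $S$ by instead summing over $\omega \in \Omega$. For a fixed $\omega$, the elements $g$ satisfying $\omega^g = \omega$ form the stabiliser $G_\omega$, so $|S| = \sum_{\omega \in \Omega} |G_\omega|$. To evaluate this I would invoke the Orbit-Stabiliser Theorem, which gives $|G_\omega| = |G| / |\omega^G|$, where $\omega^G$ denotes the $G$-orbit of $\omega$. Substituting yields $|S| = |G| \sum_{\omega \in \Omega} 1 / |\omega^G|$.

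The final step is to collect this sum orbit by orbit. Partitioning $\Omega$ into its $G$-orbits and noting that every point of a given orbit $O$ contributes the same term $1/|O|$, each orbit contributes $\sum_{\omega \in O} 1/|O| = 1$. Hence $\sum_{\omega \in \Omega} 1/|\omega^G|$ equals the number of orbits, say $N$, so that $|S| = |G| \cdot N$. Equating the two counts of $|S|$ gives $\sum_{g \in G} |\mathrm{fix}(g)| = |G| \cdot N$, and dividing by $|G|$ produces the stated formula.

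The only non-elementary ingredient, and hence the main obstacle, is the Orbit-Stabiliser Theorem used in the second step; everything else is bookkeeping. If one wished to keep the argument self-contained, I would prove it separately via the standard bijection between the right cosets of $G_\omega$ in $G$ and the points of $\omega^G$, sending $G_\omega h$ to $\omega^h$ and checking it is well defined and injective. Since the theorem is entirely classical, however, I would simply cite it and present only the double-counting argument above.
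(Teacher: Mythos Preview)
Your argument is the standard and correct double-counting proof of the Cauchy--Frobenius Theorem, and there is nothing to fault in it. Note, however, that the paper does not actually prove this theorem: it is stated as a classical tool (with a reference to Neumann for its history) and then applied, so there is no proof in the paper to compare your proposal against.
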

In other words, the number of orbits of $G$ on $\Omega$ is equal to the \emph{average number of fixed points} of the elements of $G$.

In order to apply this theorem, we need to know how many \emph{subsets of $E_n$} are fixed by a permutation $g \in \mathrm{Sym}(n)$ in its induced action on $E_n$. Equivalently, we need to know how many \emph{labelled graphs} are fixed by $g$.

So for $g \in \mathrm{Sym}(n)$, let $g_E$ denote the permutation induced by $g$ on $E_n$. For example, if $n = 4$ and $g = (1,2,3,4)$, then 
\[
g_E = \left( \{1,2\}, \{2,3\}, \{3,4\}, \{1,4\} \right)\  \left( \{1,3\}, \{2,4\} \right).
\]
Now any subset of $E_4$ that is fixed by $g_E$ must either contain \emph{all} of the pairs $\{ \{1,2\}, \{2,3\}, \{3,4\}, \{1,4\}\}$ or \emph{none} of them, and similarly for $\{\{1,3\}, \{2,4\}\}$. So there are $2^2 = 4$ subsets of $E_n$ (or labelled graphs) fixed by $g_E$.

In general, a subset of $E_n$ is fixed by $g_E$ if and only if it is the union of the cycles of $g_E$ (here, and later, we identify a cycle of $g_E$ with its support, i.e., the set of edges that it moves). So letting $c(g_E)$ denote the number of cycles of $g_E$, there are exactly $2^{c(g_E)}$ labelled graphs fixed by $g_E$. By the Cauchy-Frobenius Theorem, we conclude that the number of isomorphism classes of graphs on $n$ vertices is given by the expression
\begin{equation}\label{exp:graphs}
\frac{1}{n!} \sum_{g \in \mathrm{Sym}(n)} 2^{c(g_E)}.
\end{equation}

\section{Counting Tournaments}

In this section, we will derive an analogous expression for the number of unlabelled tournaments on $n$ vertices, essentially by considering 
\emph{arcs} rather than \emph{edges}. We start by defining 
\[
A_n = \{ (u,v) : u, v \in [n], u \ne v\},
\]
so that where $E_n$ was the set of all possible \emph{edges}, $A_n$ is the set of all possible \emph{arcs}.

So, for $g \in \mathrm{Sym}(n)$, let $g_A$ denote the permutation induced by $g$ on $A_n$. For example, if $n = 4$ and $g=(1,2,3,4)$, then 
{\small
\[
g_A = \left((1,2),(2,3),(3,4),(4,1)\right)   \left((2,1),(3,2),(4,3),(1,4)\right) \left((1,3),(2,4),(3,1),(4,2)\right).
\]}\noindent 
In this case $g_A$ has three cycles, say $C_1$, $C_2$ and $C_3$. The first two cycles $C_1$ and $C_2$ are closely related, in the sense that $C_2$ can be obtained from $C_1$ by reversing every arc, and vice versa. In contrast, reversing every arc in $C_3$ simply yields $C_3$ again.  We use $c(g_A)$ to denote the number of cycles of $g_A$.

We need a little bit more notation and terminology to discuss the general case. 
If $a$ refers to an ordered pair, say $(u,v)$, then $\overline{a}$ refers to its \emph{reverse} $(v,u)$ and $e(a)$ refers to the unordered pair $\{u,v\}$.

Suppose that $C = (a_1, a_2, \ldots, a_k)$ is a cycle of $g_A$ and let
$
\overline{C} = (\overline{a_1}, \overline{a_2}, \ldots, \overline{a_k})
$
be the cycle obtained from $C$ by reversing each arc. If $C = \overline{C}$, then we say that $C$ is \emph{self-paired}, otherwise \emph{non self-paired}.  So in general, $g_A$ has some number (maybe zero) of self-paired cycles, the remaining non-self-paired cycles occur in pairs of the form $\{C, \overline{C}\}$.

In order to use the Cauchy-Frobenius Theorem, we need to understand the relationship between the cycle structures of $g$, $g_A$ and $g_E$. 

If $C$ is a non self-paired cycle $(a_1, a_2, \ldots, a_k)$ of $g_A$, then its \emph{undirected image}
\[
\left( e(a_1), e(a_2), \ldots, e(a_k) \right)
\]
is a cycle of $g_E$, and clearly $\overline{C}$ has the same undirected image. 

If instead $C$ is a self-paired cycle  of $g_A$, then 
\[
C = \left(a_1, a_2, \ldots, a_k, \overline{a_1}, \overline{a_2}, \ldots, \overline{a_k} \right),
\]
for some $a_1,\ldots,a_k \in A_n$. As $e(a) = e(\overline{a})$, if each arc $a$ is replaced by $e(a)$ we obtain the cycle
\[
\left(e(a_1), e(a_2), \ldots, e(a_k), e(a_1), e(a_2), \ldots, e(a_k) \right),
\]
which is a cycle of $g_E$ ``wrapped around twice''. In this case, we define the \emph{undirected image} of $C$ to be the cycle $\left(e(a_1), e(a_2), \ldots, e(a_k)\right)$.

As we shall shortly see, it is the presence or absence of self-paired cycles that is crucial when counting both tournaments and odd graphs. This, in turn, depends on the \emph{order} $|g|$ of $g$.  
\begin{lemma}\label{le:selfpairedeven}
The permutation $g_A$ has a self-paired cycle if and only if $|g|$ is even.
\end{lemma}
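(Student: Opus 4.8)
The plan is to translate the statement about self-paired cycles into a purely permutation-theoretic condition on the powers of $g$, and then to read off the answer from the cycle lengths of $g$ directly.

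\textbf{Reformulation.} First I would record that reversal commutes with the action, i.e.\ $\overline{a^{g_A}} = (\overline{a})^{g_A}$ for every arc $a$, so the reversal map permutes the cycles of $g_A$. Consequently a cycle $C$ of $g_A$ containing an arc $a$ is self-paired precisely when $\overline{a}$ lies in the same cycle, that is, when $\overline{a} = a^{g_A^i}$ for some integer $i$. Writing $a = (u,v)$, this says $u^{g^i} = v$ and $v^{g^i} = u$ for some $i$; equivalently, some power $g^i$ has $(u\,v)$ as one of its cycles. Thus the lemma reduces to the claim that some power of $g$ contains a cycle of length exactly $2$ if and only if $|g|$ is even.

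\textbf{Reduction to one cycle, and the arithmetic of powers.} Since $g$ fixes each of its own cycles setwise, so does every power $g^i$; hence any $2$-cycle of $g^i$ must sit inside a single cycle of $g$. It therefore suffices to analyse the powers of one $\ell$-cycle $\sigma$ occurring in $g$. Here I would invoke the standard fact that $\sigma^i$ splits into $\gcd(i,\ell)$ cycles, each of length $\ell/\gcd(i,\ell)$, which is a divisor of $\ell$. A cycle of length $2$ occurs among these exactly when $\ell/\gcd(i,\ell) = 2$ is solvable for $i$, and this happens if and only if $\ell$ is even, the choice $i = \ell/2$ working in that case.

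\textbf{Conclusion.} Combining the two reductions, some power of $g$ has a $2$-cycle if and only if at least one cycle of $g$ has even length, which is precisely the condition that $|g|$, the least common multiple of the cycle lengths of $g$, is even. For a fully explicit forward direction I would, given an even cycle $(u_0, \ldots, u_{2t-1})$ of $g$, exhibit the arc $(u_0, u_t)$, whose $g_A$-orbit is self-paired because $(u_0, u_t)^{g_A^t} = (u_t, u_0)$; for the converse I would argue contrapositively, observing that if every cycle of $g$ has odd length then no power of $g$ can produce a $2$-cycle. I expect the only delicate point to be the reformulation step — making precise that ``$\overline{a}$ lies in the cycle of $a$'' is equivalent to that cycle being self-paired — after which everything rests on the routine arithmetic of $\gcd(i,\ell)$.
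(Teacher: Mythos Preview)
Your argument is correct. The forward direction — exhibiting the arc $(u_0,u_t)$ inside an even cycle $(u_0,\ldots,u_{2t-1})$ of $g$ and noting that $g_A^{\,t}$ sends it to its reverse — is exactly the paper's construction. The converse, however, is handled rather differently. You pass through the pleasant reformulation ``$g_A$ has a self-paired cycle $\Longleftrightarrow$ some power of $g$ contains a transposition'' and then invoke the $\gcd$ arithmetic of powers of a cycle. The paper instead dispatches the converse in one line: a self-paired cycle of $g_A$ necessarily has the shape $(a_1,\ldots,a_k,\overline{a_1},\ldots,\overline{a_k})$ and hence even length, so $|g_A|$ is even, and since $|g_A|$ divides $|g|$ the order of $g$ is even. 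Your route yields a sharper structural conclusion (it locates the even cycle of $g$ responsible for the self-paired cycle), at the cost of more machinery than the lemma strictly needs.
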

\begin{proof}
If $g$ has even order, then it has a cycle of even length, say $(1,2, \ldots, 2k)$. Then the cycle
\[
\left( (1,k+1), (2,k+2), \ldots, (k,2k), (k+1,1), \ldots, (2k,k) \right)
\]
is a self-paired cycle of $g_A$.

Conversely, if $g_A$ has a self-paired cycle, then this cycle has even length, and so $|g_A|$ is even, implying that $|g|$ is even.
\end{proof}

\begin{corollary}\label{cor:tournaut}
The automorphism group of a tournament has odd order. 
\end{corollary}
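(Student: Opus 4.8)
The plan is to reformulate a tournament on $[n]$ as a subset of $A_n$ and then exploit the dichotomy forced by self-paired cycles via \Cref{le:selfpairedeven}. First I would observe that orienting the complete graph on $[n]$ amounts to choosing, for each unordered pair $\{u,v\}$, exactly one of the two arcs $(u,v)$ and $(v,u)$. Thus a tournament $T$ corresponds to a subset $S \subseteq A_n$ that meets each paired set $\{a, \overline{a}\}$ in precisely one arc. A permutation $g \in \mathrm{Sym}(n)$ is an automorphism of $T$ exactly when $g_A$ fixes $S$ setwise, which, just as in the edge case of \Cref{sec:graphcount}, happens if and only if $S$ is a union of the cycles of $g_A$.

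Next I would show that no automorphism of $T$ can have even order. Suppose, for contradiction, that some $g \in \mathrm{Aut}(T)$ has even order. By \Cref{le:selfpairedeven}, $g_A$ then has a self-paired cycle $C = (a_1, \ldots, a_k, \overline{a_1}, \ldots, \overline{a_k})$. Since $S$ is a union of cycles of $g_A$, either $C \subseteq S$ or $C \cap S = \emptyset$. In the first case $S$ contains both $a_1$ and $\overline{a_1}$, and in the second it contains neither; but each of these possibilities contradicts the requirement that $S$ meets the pair $\{a_1, \overline{a_1}\}$ in exactly one arc. Hence every automorphism of $T$ has odd order.

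Finally, to upgrade ``every element has odd order'' to ``the group has odd order'', I would invoke the elementary fact that a finite group of even order contains an element of order $2$ (Cauchy's theorem): if $|\mathrm{Aut}(T)|$ were even it would contain such an element, contradicting the previous paragraph. Therefore $\mathrm{Aut}(T)$ has odd order.

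I expect the only genuinely substantive step to be the translation in the first paragraph --- recognising that the defining property of a tournament is precisely ``one arc from each paired set'', and that a self-paired cycle necessarily bundles together both members $a_1$ and $\overline{a_1}$ of such a pair. Once this is in place the incompatibility is immediate, and the remainder is bookkeeping together with the standard fact about elements of order $2$.
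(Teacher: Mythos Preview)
Your proposal is correct and follows essentially the same approach as the paper: both argue that an automorphism of even order would force the arc set, being a union of cycles of $g_A$, to either contain or omit an entire self-paired cycle, contradicting the tournament condition via \Cref{le:selfpairedeven}. You are slightly more explicit than the paper in invoking Cauchy's theorem to pass from ``every element has odd order'' to ``the group has odd order'', but this is the same argument.
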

\begin{proof}
Let $T$ be a tournament, and suppose that $g \in \mathrm{Aut}(T)$. If $g$ has even order, then by \cref{le:selfpairedeven}, $g_A$ has a self-paired cycle $C$, which contains at least one pair of arcs of the form $\{a,\overline{a}\}$.  The tournament is fixed by $g$ if and only if its arc set is a union of cycles of $g_A$, and so $C$ must either be a subset of $A(T)$ or disjoint from $A(T)$. However, neither is possible because exactly one of $a$, $\overline{a}$ is an arc of $T$.
\end{proof}

\begin{theorem}\label{t:tournaments}
The number of isomorphism classes of tournaments on $n$ vertices is given by the expression
\begin{equation}\label{exp:tourn}
\frac{1}{n!} \sum_{\substack{g \in \mathrm{Sym}(n)\\ |g|\ \mathrm{odd}}} 2^{c(g_E)}.
\end{equation}
\end{theorem}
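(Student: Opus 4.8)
The plan is to apply the Cauchy-Frobenius Theorem (\cref{thm:cauchyfrob}) to the action of $\mathrm{Sym}(n)$ on the set of tournaments with vertex set $[n]$, where $g \in \mathrm{Sym}(n)$ sends a tournament $T$ to the tournament with arc set $A(T)^{g_A}$. Since isomorphism classes of tournaments are exactly the orbits of this action, the quantity we want equals $\frac{1}{n!}\sum_{g\in\mathrm{Sym}(n)}|\mathrm{fix}(g)|$, where $\mathrm{fix}(g)$ is the set of tournaments fixed by $g$. The whole argument then reduces to computing $|\mathrm{fix}(g)|$ for each $g$, and the key observation throughout is that a tournament $T$ is fixed by $g$ precisely when $A(T)$ is a union of cycles of $g_A$ that contains exactly one of each reverse pair $\{a,\overline{a}\}$.

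First I would dispose of the permutations of even order. By \cref{cor:tournaut} no tournament is fixed by such a $g$, so these contribute $0$ to the sum; equivalently, only the $g$ with $|g|$ odd survive, which is exactly the restricted range of summation appearing in \eqref{exp:tourn}. Next, fix $g$ of odd order. By \cref{le:selfpairedeven}, $g_A$ has no self-paired cycle, so its cycles split into reverse pairs $\{C,\overline{C}\}$ with $C \neq \overline{C}$. Because $C$ contains exactly one arc over each edge of its undirected image while $\overline{C}$ contains the complementary arcs, any $g$-invariant tournament must contain, for each such pair, either all of $C$ or all of $\overline{C}$, and these choices can be made independently across the pairs. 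Hence $|\mathrm{fix}(g)|=2^{p(g)}$, where $p(g)=c(g_A)/2$ is the number of reverse pairs.

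The main step, and the one I expect to require the most care, is to identify $p(g)$ with $c(g_E)$. I would do this by showing that, when $|g|$ is odd, the undirected image map is a bijection from the reverse pairs $\{C,\overline{C}\}$ of $g_A$-cycles to the cycles of $g_E$. Surjectivity is immediate: any cycle $D$ of $g_E$ contains some edge $e$, and the $g_A$-cycle through either arc over $e$ has undirected image $D$. For injectivity, note that $C\cup\overline{C}$ is exactly the set of all arcs lying over the edges of its undirected image, so two reverse pairs sharing an undirected image must coincide. The absence of self-paired cycles is precisely what rules out the degenerate ``wrapped around twice'' behaviour described before the lemma, guaranteeing that each cycle of $g_E$ of length $\ell$ lifts to two $g_A$-cycles of length $\ell$ rather than to a single cycle of length $2\ell$.

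This bijection yields $c(g_A)=2c(g_E)$, hence $p(g)=c(g_E)$ and $|\mathrm{fix}(g)|=2^{c(g_E)}$ for every $g$ of odd order. Substituting into the Cauchy-Frobenius expression and discarding the vanishing even-order terms gives exactly \eqref{exp:tourn}, completing the argument.
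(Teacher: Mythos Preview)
Your proposal is correct and follows essentially the same route as the paper's proof: apply Cauchy--Frobenius, kill the even-order terms via \cref{cor:tournaut}, and for odd-order $g$ count fixed tournaments as $2^{c(g_A)/2}=2^{c(g_E)}$ using the absence of self-paired cycles. The paper states the identity $c(g_A)/2=c(g_E)$ more briskly, relying on the earlier discussion of undirected images, whereas you spell out the bijection between reverse pairs and $g_E$-cycles explicitly; but the substance is identical.
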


\begin{proof}
We apply the Cauchy-Frobenius Theorem with $G=\mathrm{Sym}(n)$ and $\Omega$ the set of (labelled) tournaments. By \cref{cor:tournaut}, permutations of even order do not contribute to the sum.
If $g$ has odd order, then $g_A$ has no self-paired cycles, and a tournament is fixed by $g$ if and only if its arc set is a union of cycles of $g_A$ containing \emph{exactly one} cycle from each pair $\{C, \overline{C}\}$. Therefore $g$ fixes exactly $2^{c(g_A)/2}$ tournaments. As $g_A$ has no self-paired cycles, $c(g_A)/2 = c(g_E)$ and so the number of isomorphism classes of tournaments on $n$ vertices is given by  \eqref{exp:tourn}.
\end{proof}

An explicit expression for the number of isomorphism classes of tournaments was first given by Davis \cite{MR55294}, although expressed as a sum over conjugacy classes of permutations rather than a sum over individual permutations.

\section{Counting Odd Graphs}

We now derive a similar expression for counting pairwise non-isomorphic \emph{odd graphs} on $n$ vertices. Let $X$ be a graph on the vertex set $[n]$. Whether or not an automorphism $g \in \mathrm{Aut}(X)$ is an odd automorphism for $X$ is independent of the orientation of $X$, and it is convenient henceforth to assume $V(X) = [n]$ and each edge is oriented from the lower- to the higher-numbered vertex.

An \emph{inversion} of a permutation $h \in \mathrm{Sym}(n)$ is a pair $\{u,v\}$ of distinct elements of $[n]$ such that $u<v$ and $u^h > v^h$. An edge of $X$ has its sense reversed by a permutation $g \in \mathrm{Aut}(X)$ if and only if it is an inversion of $g$. So the set of edges of a graph $X$ whose sense is reversed by a permutation $g \in \mathrm{Aut}(X)$ is the set of inversions of $g$ that are also edges of $X$.

Now, given $g \in \mathrm{Aut}(X)$, the \emph{sign} $\mathrm{sgn}_X(g)$ of $g$ with respect to $X$ is the value
\[
\mathrm{sgn}_X(g) = \prod_{\{u,v\} \in E(X)} \frac{u^g - v^g}{u - v}.
\]
Each term in the product is well-defined because $\frac{u^g - v^g}{u - v} = \frac{v^g - u^g}{v - u}$. Because $g \in \mathrm{Aut}(X)$, the numerator and denominator of this expression are each a product of the same multiset of values, but permuted, and possibly with changes of sign. It follows therefore that $\mathrm{sgn}_X(g) \in \{-1,1\}$. 

An edge $\{u,v\}$ contributes $\pm 1$ to the product, with the contribution being $-1$ if and only if its sense is reversed by $g$. Therefore $g$ is an odd automorphism for $X$ if and only if $\mathrm{sgn}_X(g) = -1$.

\begin{lemma}
The function $\mathrm{sgn}_X$ is a group homomorphism from $\mathrm{Aut}(X)$ to the multiplicative group $(\{-1,1\}, \times)$.
\end{lemma}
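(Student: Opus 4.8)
The plan is to verify multiplicativity directly. We have already observed that $\mathrm{sgn}_X$ takes values in $\{-1,1\}$, so it remains only to check that $\mathrm{sgn}_X(gh) = \mathrm{sgn}_X(g)\,\mathrm{sgn}_X(h)$ for all $g,h \in \mathrm{Aut}(X)$. First I would split each factor in the defining product by inserting the intermediate value $u^g - v^g$, writing
\[
\frac{u^{gh}-v^{gh}}{u-v} = \frac{u^{gh}-v^{gh}}{u^g-v^g}\cdot\frac{u^g-v^g}{u-v}.
\]
Each factor here is well-defined and nonzero because $g$ and $h$ are bijections, so distinct vertices have distinct images. Taking the product over all edges $\{u,v\} \in E(X)$ then splits $\mathrm{sgn}_X(gh)$ into two factors.

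The second factor is immediately $\mathrm{sgn}_X(g)$ by definition. The key step is to recognise the first factor as $\mathrm{sgn}_X(h)$. Writing $u^{gh} = (u^g)^h$ and setting $u' = u^g$, $v' = v^g$, the first factor becomes
\[
\prod_{\{u,v\}\in E(X)} \frac{(u')^h - (v')^h}{u'-v'}.
\]
Here I would use crucially that $g \in \mathrm{Aut}(X)$, so that $g$ permutes the edge set: as $\{u,v\}$ ranges over $E(X)$, the pair $\{u',v'\} = \{u^g,v^g\}$ ranges bijectively over $E(X)^g = E(X)$. Reindexing the product by this bijection turns it into $\prod_{\{u',v'\}\in E(X)} \frac{(u')^h-(v')^h}{u'-v'} = \mathrm{sgn}_X(h)$. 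Combining the two factors yields $\mathrm{sgn}_X(gh) = \mathrm{sgn}_X(h)\,\mathrm{sgn}_X(g) = \mathrm{sgn}_X(g)\,\mathrm{sgn}_X(h)$, the last equality holding because the values commute in $\{-1,1\}$.

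The only real obstacle is this reindexing step, and it is worth emphasising why it succeeds: it relies entirely on $g$ being an automorphism of $X$, which is exactly what guarantees that $\{u^g, v^g\}$ is an edge whenever $\{u,v\}$ is. This is also the reason $\mathrm{sgn}_X$ need not extend to a homomorphism on all of $\mathrm{Sym}(n)$, since for a general permutation the reindexed product would run over the edges of a different graph. Everything else is the routine telescoping and commutativity already indicated above.
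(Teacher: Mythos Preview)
Your proof is correct and follows essentially the same approach as the paper: split each factor in the defining product by inserting $u^g - v^g$, identify one product as $\mathrm{sgn}_X(g)$, and reindex the other using that $g \in \mathrm{Aut}(X)$ permutes $E(X)$ to obtain $\mathrm{sgn}_X(h)$. Your additional remarks on well-definedness and on why the argument fails outside $\mathrm{Aut}(X)$ are apt but not strictly needed.
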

\begin{proof}
Suppose that $g$, $h \in \mathrm{Aut}(X)$. Then
\begin{align*}
\mathrm{sgn}_X(gh) &= \prod_{\{u,v\} \in E(X)} \frac{u^{gh}-v^{gh}}{u - v}\\
&= \prod_{\{u,v\} \in E(X)} \frac{u^g-v^g}{u - v} \cdot \frac{(u^g)^h - (v^g)^h} {u^g-v^g}\\
&= \prod_{\{u,v\} \in E(X)} \frac{u^g-v^g}{u - v} \cdot \prod_{\{u,v\} \in E(X)}\frac{(u^g)^h - (v^g)^h} {u^g-v^g}\\
&= \mathrm{sgn}_X(g) \ \mathrm{sgn}_X(h),
\end{align*}
where the second product in the penultimate line is equal to $\mathrm{sgn}_X(h)$ because $g \in \mathrm{Aut}(X)$.
\end{proof}

\begin{corollary}\label{lem:halfodd}
Exactly half of the permutations in $\mathrm{Aut}(X)$ are odd automorphisms for $X$ if and only if $X$ is an odd graph.
\end{corollary}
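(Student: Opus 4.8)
The plan is to exploit the group homomorphism $\mathrm{sgn}_X \colon \mathrm{Aut}(X) \to (\{-1,1\}, \times)$ established in the preceding lemma, together with the two facts recorded above: a permutation $g \in \mathrm{Aut}(X)$ is an odd automorphism for $X$ precisely when $\mathrm{sgn}_X(g) = -1$, and $X$ is an odd graph precisely when at least one such $g$ exists. In the language of the homomorphism, these say that the odd automorphisms are exactly the elements of the fibre $\mathrm{sgn}_X^{-1}(-1)$, and that $X$ is odd exactly when $\mathrm{sgn}_X$ is surjective (equivalently, has nontrivial image).

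First I would set $K = \ker(\mathrm{sgn}_X) = \mathrm{sgn}_X^{-1}(1)$, the subgroup of automorphisms that are \emph{not} odd for $X$, so that $\mathrm{Aut}(X)$ is the disjoint union of $K$ and the set of odd automorphisms $\mathrm{sgn}_X^{-1}(-1)$. The crux is then the standard observation that a surjective homomorphism onto a group of order two has a kernel of index two: if $X$ is odd, then $\mathrm{sgn}_X$ is surjective, so $K$ has index $2$ in $\mathrm{Aut}(X)$, and the nontrivial coset $\mathrm{sgn}_X^{-1}(-1)$ has the same cardinality as $K$. Hence exactly half of $\mathrm{Aut}(X)$ consists of odd automorphisms, which proves one implication.

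For the converse I would argue via the even case. If $X$ is even, then $\mathrm{sgn}_X$ has trivial image, so $\mathrm{sgn}_X^{-1}(-1)$ is empty and $X$ has no odd automorphisms at all. Since $\mathrm{Aut}(X)$ always contains the identity and so is nonempty, the count $0$ of odd automorphisms differs from $|\mathrm{Aut}(X)|/2$; it is therefore false that exactly half of $\mathrm{Aut}(X)$ are odd automorphisms. Taking the contrapositive, if exactly half of $\mathrm{Aut}(X)$ are odd automorphisms then $X$ is not even, i.e.\ $X$ is odd. Combining the two cases yields the biconditional.

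I do not expect a genuine obstacle here; the only point requiring a little care is the bookkeeping in the even case, namely using $|\mathrm{Aut}(X)| \geqslant 1$ to rule out the degenerate reading in which an empty set of odd automorphisms could be mistaken for ``half''. Everything else is the elementary index-two argument for the fibres of $\mathrm{sgn}_X$.
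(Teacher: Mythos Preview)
Your argument is correct and follows essentially the same route as the paper's proof, which records the chain of equivalences ``exactly half are odd'' $\Leftrightarrow$ ``$\mathrm{sgn}_X$ is onto'' $\Leftrightarrow$ ``some $g$ has $\mathrm{sgn}_X(g)=-1$'' $\Leftrightarrow$ ``$X$ is odd''. You simply unpack the index-two kernel reasoning that underlies these equivalences and make explicit the use of $|\mathrm{Aut}(X)|\geqslant 1$ in the even case.
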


\begin{proof}
The following are equivalent: (a)~exactly half of the permutations in $\mathrm{Aut}(X)$ are odd automorphisms for $X$, 
(b) $\mathrm{sgn}_X$ maps onto $\{-1,1\}$, (c)~there exists $g\in\mathrm{Aut}(X)$ with $\mathrm{sgn}_X(g)=-1$, and (d)~$X$ is an odd graph.
\end{proof}

\begin{lemma}\label{le:cycle}
Suppose that $g \in \mathrm{Sym}(n)$. Then a cycle of $g_E$ contains an odd number of inversions of $g$ if and only if it is the undirected image of a self-paired cycle of $g_A$.
\end{lemma}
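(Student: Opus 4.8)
The plan is to fix a single cycle $(e_1, e_2, \ldots, e_k)$ of $g_E$, with $e_{i+1} = e_i^{g_E}$ and indices read modulo $k$, and to convert the parity of the inversion count inside this cycle into a single sign that detects exactly the self-paired versus non-self-paired dichotomy. For each edge $e_i = \{u_i, v_i\}$ with $u_i < v_i$, I would single out the \emph{positively oriented} arc $p_i = (u_i, v_i)$ lying over $e_i$. The starting observation is that $p_i^{g_A} = (u_i^g, v_i^g)$ is one of the two arcs over $e_{i+1}$, and it equals $p_{i+1}$ precisely when $u_i^g < v_i^g$ and equals its reverse $\overline{p_{i+1}}$ precisely when $u_i^g > v_i^g$. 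Since $e_i$ is an inversion of $g$ exactly in the latter case, I would set $\delta_i = +1$ if $p_i^{g_A} = p_{i+1}$ and $\delta_i = -1$ if $p_i^{g_A} = \overline{p_{i+1}}$, so that the cycle contains $\#\{i : \delta_i = -1\}$ inversions, an odd number precisely when $\prod_{i=1}^{k} \delta_i = -1$.

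The heart of the argument is to recognise $\prod_{i=1}^{k}\delta_i$ as the obstruction to the arc cycle through $p_1$ closing up with the correct orientation. I would track the orientation of the arcs $p_1, p_1^{g_A}, p_1^{g_A^2}, \ldots$ by defining $\eta_j = +1$ if $p_1^{g_A^j} = p_{1+j}$ and $\eta_j = -1$ if $p_1^{g_A^j} = \overline{p_{1+j}}$, so that $\eta_0 = +1$. The key computation is a short induction establishing $\eta_{j+1} = \eta_j\,\delta_{1+j}$: applying $g_A$ to a positive arc contributes the factor $\delta_{1+j}$ directly, whereas applying it to a reversed arc contributes the opposite sign because reversal commutes with $g_A$ (that is, $\overline{a}^{g_A} = \overline{a^{g_A}}$). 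Telescoping then gives $\eta_k = \prod_{i=1}^{k}\delta_i$.

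It remains to interpret $\eta_k$ geometrically. Because $e_1^{g_E^k} = e_1$, the arc $p_1^{g_A^k}$ lies over $e_1$ and so equals $p_1$ or $\overline{p_1}$. If $\eta_k = +1$, then $p_1^{g_A^k} = p_1$, the cycle $C$ of $g_A$ through $p_1$ has length $k$, and $\overline{p_1}$ lies in the distinct cycle $\overline{C}$; this is the non-self-paired case, and by the definition of undirected image our edge cycle is the image of the pair $\{C, \overline{C}\}$. If instead $\eta_k = -1$, then $p_1^{g_A^k} = \overline{p_1}$, so the cycle through $p_1$ contains both $p_1$ and its reverse, closes up only after $2k$ steps, and is therefore self-paired with our edge cycle as its undirected image. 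Combining this with $\eta_k = \prod_{i=1}^{k}\delta_i$ yields the claim: the cycle contains an odd number of inversions $\iff \eta_k = -1 \iff$ it is the undirected image of a self-paired cycle of $g_A$.

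The main obstacle I anticipate is getting the sign bookkeeping in the induction exactly right, in particular being careful that applying $g_A$ to $\overline{p_{1+j}}$ contributes the \emph{opposite} sign to applying $g_A$ to $p_{1+j}$, which is precisely what produces the clean recurrence $\eta_{j+1} = \eta_j\,\delta_{1+j}$. Once that is in place, the closing-up argument and the identification with self-paired cycles are essentially a restatement of the undirected-image structure already set up immediately before the lemma.
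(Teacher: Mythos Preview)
Your proof is correct and is essentially the same argument as the paper's: where you track signs $\delta_i \in \{\pm 1\}$ and their running product $\eta_j$, the paper labels each arc of a $g_A$-cycle with $0$ or $1$ according to its orientation and counts label changes while traversing the cycle, which is precisely the additive version of your multiplicative bookkeeping. Both reduce to the observation that the orientation returns to its starting value (your $\eta_k = +1$) exactly in the non-self-paired case and flips (your $\eta_k = -1$) exactly in the self-paired case.
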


\begin{proof}
Let $C$ be a cycle of $g_A$ that is not self-paired, and let $C'$ be its undirected image. Label each arc $(u,v)$ of $C$ with $0$ if $u < v$ or $1$ otherwise, and count the number of times the label \emph{changes} (from $0$ to $1$ or vice-versa) as $C$ is traversed exactly once in cyclic order starting and ending at the same arc. This number is precisely the number of inversions of $g$ that lie in $C'$. As the label at the starting and ending point of this traversal is the same, the number of changes is even. Therefore $C'$ contains an \emph{even number} of inversions of $g$. 

Now suppose that $C'$ is the undirected image of a self-paired cycle 
\[
C = (a_1, a_2, \ldots, a_k, \overline{a_1},\overline{a_2},\ldots,\overline{a_k})
\]
of $g_A$. As previously, label each arc $(u,v)$ in $C$ with $0$ if $u<v$ and $1$ if $v < u$, and again count the total number of label changes as the cycle $C$ is traversed from $a_1$ to $\overline{a_1}$. (This corresponds to traversing $C'$ exactly once in cyclic order, but takes into account the fact that $g^k$ reverses the sense of $a_1$.)  As the start and end of this traversal have \emph{different} labels, there are an \emph{odd number} of label changes. Therefore the undirected image of any self-paired cycle of $g_A$ contains an \emph{odd number} of inversions of $g$.
\end{proof}

Returning to the example of $g=(1,2,3,4)$ from Section \ref{sec:graphcount}, we see that the first cycle of $g_E$ contains two inversions of $g$, namely $\{3,4\}$ and $\{1,4\}$, while the second cycle of $g_E$ contains just one.

Finally we have enough to count odd graphs.
\begin{theorem}\label{t:odd}
The number of isomorphism classes of odd graphs on $n$ vertices is given by the expression
\begin{equation}\label{eqn:oddgraph}
\frac{1}{n!} \sum_{\substack{g \in \mathrm{Sym}(n)\\ |g|\ \mathrm{even}}} 2^{c(g_E)}.
\end{equation}
\end{theorem}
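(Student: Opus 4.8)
The plan is to count, in two different ways, the number of pairs $(X,g)$ in which $X$ is a labelled graph on $[n]$ and $g \in \mathrm{Aut}(X)$ is an odd automorphism for $X$, i.e.\ $\mathrm{sgn}_X(g) = -1$. This is the natural odd-graph analogue of the Cauchy--Frobenius counts used for graphs and tournaments, now restricted to automorphisms of negative sign. Let $N$ denote the number of isomorphism classes of odd graphs on $n$ vertices, the quantity we wish to identify with \eqref{eqn:oddgraph}.

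First I would evaluate the count by summing over $X$. If $X$ is even then it has no odd automorphisms and contributes nothing. If $X$ is odd then, by \cref{lem:halfodd}, exactly half of its automorphisms are odd, so $X$ contributes $|\mathrm{Aut}(X)|/2$. Grouping the labelled odd graphs into isomorphism classes and applying the orbit--stabiliser theorem (a class of size $s$ consists of graphs whose automorphism group has order $n!/s$), each isomorphism class of odd graphs contributes a total of $n!/2$. Hence the number of such pairs equals $\frac{n!}{2}\,N$.

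Next I would evaluate the same count by summing over $g \in \mathrm{Sym}(n)$, computing for each $g$ the number of labelled graphs $X$ fixed by $g$ for which $g$ is odd. A graph is fixed by $g$ precisely when its edge set is a union of cycles of $g_E$, so such graphs correspond bijectively to subsets $S$ of the cycles of $g_E$, of which there are $2^{c(g_E)}$. Since an edge contributes $-1$ to $\mathrm{sgn}_X(g)$ exactly when it is an inversion of $g$, we have $\mathrm{sgn}_X(g) = (-1)^{t}$, where $t$ is the total number of inversions of $g$ lying on edges of $X$; by \cref{le:cycle} a cycle of $g_E$ carries an odd number of inversions exactly when it is the undirected image of a self-paired cycle of $g_A$. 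Calling these the bad cycles, $g$ is an odd automorphism of $X$ if and only if $S$ contains an odd number of bad cycles. A short parity count then shows that, when $g_E$ has at least one bad cycle, exactly half of the $2^{c(g_E)}$ invariant graphs have this property, giving $2^{c(g_E)-1}$, and when there is no bad cycle the count is $0$. By \cref{le:selfpairedeven}, $g_E$ has a bad cycle if and only if $g_A$ has a self-paired cycle, i.e.\ if and only if $|g|$ is even. Summing over $g$ therefore yields $\sum_{|g|\ \mathrm{even}} 2^{c(g_E)-1}$.

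Equating the two evaluations gives $\frac{n!}{2}\,N = \frac{1}{2}\sum_{|g|\ \mathrm{even}} 2^{c(g_E)}$, which is exactly \eqref{eqn:oddgraph}. The main obstacle is the second evaluation: one must correctly translate the sign $\mathrm{sgn}_X(g)$ into a parity condition on the chosen cycles (this is where \cref{le:cycle} does the real work) and then verify the clean factor-of-two collapse, namely that the presence of even a single bad cycle forces exactly half of the invariant graphs to admit $g$ as an odd automorphism; everything else is bookkeeping. As a consistency check, replacing the restriction ``$\mathrm{sgn}_X(g)=-1$'' by the sign-weighted sum $\sum_{X}\sum_{g\in\mathrm{Aut}(X)}\mathrm{sgn}_X(g)$ recovers the number of even graphs as $\frac{1}{n!}\sum_{|g|\ \mathrm{odd}} 2^{c(g_E)}$, so that the odd and even counts together reproduce \eqref{exp:graphs}, in agreement with the relation highlighted in the introduction.
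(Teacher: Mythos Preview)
Your proposal is correct and follows essentially the same double-counting argument as the paper: both count pairs $(X,g)$ with $g$ an odd automorphism of $X$, obtain $n!/2$ per isomorphism class of odd graphs via \cref{lem:halfodd} and orbit--stabiliser, and then for each $g$ of even order use \cref{le:selfpairedeven} and \cref{le:cycle} to show that exactly $2^{c(g_E)-1}$ of the $g$-invariant graphs make $g$ odd. The only cosmetic difference is that the paper fixes one particular ``bad'' cycle $C$ and pairs subsets $\widehat{E}\leftrightarrow\widehat{E}\cup C$, whereas you phrase it as a parity count over all bad cycles; these are the same argument.
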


\begin{proof}
We will double-count the elements of the following set
\[
S = \{(X, g) : g \text{ is an odd automorphism for the $n$-vertex odd graph } X\}.
\]

If $X$ is an odd graph, then there are $n!/|\mathrm{Aut}(X)|$ labelled graphs isomorphic to $X$, and by \cref{lem:halfodd}, each of these has $|\mathrm{Aut}(X)|/2$ odd automorphisms, meaning that each isomorphism class of odd graphs contributes $n!/2$ pairs to $S$. So if there are $K$ pairwise non-isomorphic odd graphs, then $|S| = K n!/2$.

Now, a permutation $g \in \mathrm{Sym}(n)$ is an odd automorphism for a graph $X$ if and only if $E(X)$ is a union of cycles of $g_E$ that (collectively) contain an \emph{odd number} of inversions of $g$.

If $g$ has odd order, then by \cref{le:selfpairedeven} every cycle of $g_A$ is non self-paired, and hence by \cref{le:cycle}, every cycle of $g_E$ contains an even number of inversions of $g$. Therefore $S$ contains no pairs $(X,g)$ for which $g$ has odd order.

If $g$ has even order, then $g_E$ has at least one cycle, say $C$, that is the undirected image of a self-paired cycle and therefore contains an odd number of inversions of $g$ (again using Lemmas \ref{le:selfpairedeven} and \ref{le:cycle}). 

There are $2^{c(g_E)-1}$ subsets of $E_n$ obtained by taking the union of a subset of the cycles of $g_E$ other than $C$. If $\widehat{E}$ is one of these subsets, then \emph{exactly one} of $\widehat{E}$ and $\widehat{E} \cup C$ contains an odd number of inversions of $g$. (If $\widehat{E}$ contains an even number of inversions of $g$, then $\widehat{E} \cup C$ contains an odd number of inversions of~$g$.)

Therefore when $g$ has even order, it contributes $2^{c(g_E)-1}$ pairs $(X,g)$ to $S$.

It follows that 
\[
|S| = K \frac{n!}{2} = \sum_{\substack{g \in \mathrm{Sym}(n)\\ |g|\ \mathrm{even}}} 2^{c(g_E)-1}
\]
and so $K$ is given by the expression~\eqref{eqn:oddgraph}.
\end{proof}

Every permutation of $\mathrm{Sym}(n)$ has even order or odd order, and so we obtain, from Theorems~\ref{t:tournaments} and~\ref{t:odd}, the final expression
\[
\underbrace{\frac{1}{n!} \sum_{g \in \mathrm{Sym}(n)} 2^{c(g_E)}}_{\text{\# Graphs}}
=
\underbrace{\frac{1}{n!} \sum_{\substack{g \in \mathrm{Sym}(n)\\ |g|\ \mathrm{odd}}} 2^{c(g_E)}}_{\text{\# Tournaments}}
+
\underbrace{\frac{1}{n!} \sum_{\substack{g \in \mathrm{Sym}(n)\\ |g|\ \mathrm{even}}} 2^{c(g_E)}}_{\text{\# Odd Graphs}}.
\]
Therefore, the number of isomorphism classes of $n$-vertex tournaments is equal to the number of isomorphism classes of $n$-vertex even graphs, and we have proved Theorem \ref{thm:evenandtourn}.

\backmatter

\bmhead{Acknowledgments}

The authors thank the Centre for the Mathematics of Symmetry and Computation at the University of Western Australia for supporting the 2022 CMSC Research Retreat where this problem was solved.

SDF was supported by a St Leonard's International Doctoral Fees Scholarship and a School of Mathematics \& Statistics PhD Funding Scholarship at the University of St Andrews.

SPG was supported by the Australian Research Council Discovery Project DP190100450.

\end{document}